\documentclass[11pt, reqno]{amsart}

\usepackage{setspace}
\usepackage{color}
\usepackage{hyperref}  
\usepackage{graphicx}
\usepackage{datetime}
\usepackage{verbatim}
\usepackage{srcltx}

\usepackage{parskip}
\setlength\parindent{1em}

\usepackage{amsfonts}
\usepackage{amsmath}

\def\EE{\mathbb E}

\def\({\left(}
\def\){\right)}  
\def\<{\langle}
\def\>{\rangle}
\let\:=\colon
\let\epsilon\varepsilon
\let\log\log

\newcommand{\hm}[1]{\leavevmode{\marginpar{\tiny%
$\hbox to 0mm{\hspace*{-0.5mm}$\leftarrow$\hss}%
\vcenter{\vrule depth 0.1mm height 0.1mm width \the\marginparwidth}%
\hbox to 0mm{\hss$\rightarrow$\hspace*{-0.5mm}}$\\\relax\raggedright #1}}}

\def\({\left(}
\def\){\right)}
\def\:{\colon}
\def\[{\left[}
\def\]{\right]}

\let\epsilon\varepsilon

\DeclareMathAlphabet{\Sss}{U}{bbmss}{m}{n}

\usepackage{amssymb}
\newcommand{\ignore}[1]{{}}

\makeatletter

\makeatother

\newtheoremstyle{note}
  {4pt}
  {4pt}
  {\sl}
  {}
  {\bfseries}
  {.}
  {.5em}
  {}

\newtheoremstyle{introthms}
  {3pt}
  {3pt}
  {\normalfont}
  {}
  {\bfseries}
  {.}
  {.5em}
  {\thmnote{#3}}

\newtheoremstyle{cases}
  {2pt}
  {2pt}
  {\rm}
  {}
  {\bfseries}
  {.}
  {.3em}
  {}

\theoremstyle{plain}
\newtheorem{theorem}              {Theorem}       
\newtheorem{lemma}      [theorem] {Lemma}         
\newtheorem{corollary}  [theorem] {Corollary}     
\newtheorem{fact}       [theorem] {Fact}          

\theoremstyle{cases}

\theoremstyle{introthms}

\theoremstyle{note}

\newtheorem*{acknowledge}{\textbf{Acknowledgement}}

\title{On constant-multiple-free sets \\ contained in a random set of integers}

\author[S.~J.~Lee]{Sang June Lee}
\address{School of Mathematics, Korea Institute for Advanced Study (KIAS), 
Seoul 130-722, South Korea}
\email{sjlee242@gmail.com}
\thanks{The author was supported by Korea Institute for Advanced
  Study (KIAS) grant funded by the Korea government (MEST)}

\date{\today, \currenttime}

\begin{document}
\pagestyle{plain}
\thispagestyle{empty}
\footskip=30pt

\shortdate
\settimeformat{ampmtime}


\begin{abstract}   For a rational number $r>1$, a set $A$ of positive integers is called an \textit{$r$-multiple-free set} if $A$ does not contain any solution of the equation $rx = y$. The extremal problem on estimating the maximum possible size of $r$-multiple-free sets contained in $[n]:=\{1,2,\cdots,n\}$ has been studied for its own interest in combinatorial number theory and for application to coding theory.  Let $a$, $b$ be positive integers such that $a<b$ and the greatest common divisor of $a$ and $b$ is $1$. Wakeham and Wood showed that the maximum size of $(b/a)$-multiple-free sets contained in $[n]$ is $\frac{b}{b+1}n+O(\log n)$.
 
 In this note we generalize this result as follows. For a real number $p\in (0,1)$, let $[n]_p$ be a  set of integers obtained by choosing each element $i\in [n]$ randomly and independently with probability $p$. We
show that the maximum possible size of $(b/a)$-multiple-free sets contained in $[n]_p$ is $\frac{b}{b+p}pn+O(\sqrt{pn}\log n \log \log n)$ with probability that goes to $1$ as $n\to \infty$.
\end{abstract}

\maketitle

\section{Introduction}

In recent years a trend in extremal combinatorics concerned with investigating how classical extremal results in \textit{dense} environments transfer to \textit{sparse} settings, and it has seen to be a fruitful subject of research. Especially, in combinatorial number theory, the following extremal problem in a dense environment  has been well-studied and successively extended to sparse settings: Fix an equation and estimate the maximum size of subsets of $[n]:=\{1, 2, \cdots, n\}$ containing no non-trivial solutions of the given equation.

An example of this line of research is a version of Roth's theorem~\cite{roth53} on arithmetic progressions of length $3$ (with respect to the equation $x_1+x_3=2x_2$) for random subsets of
integers in Kohayakawa--\L uczak--R\"odl~\cite{KLRap3}.  Also, Szemer\'edi's theorem~\cite{szemeredi75} was transfered to random subsets of integers in Conlon--Gowers~\cite{conlon:_theor_in_spars_random} and
Schacht~\cite{schacht:_extrem_resul}. 
The result of Erd\H{o}s--Tur\'{a}n~\cite{Erdos41}, Chowla~\cite{Chowla44}, and
Erd\H{o}s~\cite{Erdos44} in 1940s on the maximum size of Sidon sets in $[n]$ was extended in~\cite{Sidon_SODA, Sidon} to sparse random subsets of $[n]$, where a \textit{Sidon set} is a set of positive integers not containing any non-trivial solution of $x_1+x_2=y_1+y_2$.

In this paper we transfer the following extremal results to sparse random subsets. For a rational number $r>1$, a set $A$ of positive integers is called an \textit{$r$-multiple-free set} if $A$ does not contain any solution of $rx = y$. 
 An interesting problem on $r$-multiple-free sets is of estimating the maximum possible size $f_r(n)$ of $r$-multiple-free sets contained in $[n]:=\{1,2,\cdots,n\}$. This extremal problem has been studied in~\cite{Wang1989, Leung1994, Wakeham} for its own interest in combinatorial number theory, and also was applied to coding theory in~\cite{Jimbo2007}.
 
Wang~\cite{Wang1989} showed that $f_2(n)=\frac{2}{3}n+O(\log n)$. Leung and Wei~\cite{Leung1994} proved that for every integer $r>1$, $f_r(n)=\frac{r}{r+1}n+O(\log n)$.
Wakeham and Wood~\cite{Wakeham} extended it to rational numbers as follows. For positive integers $a$ and $b$, let $gcd(b,a)$ be the greatest common divisor of $a$ and $b$.
\begin{theorem}[Wakeham and Wood~\cite{Wakeham}]\label{thm:dense} Let $a,b$ be positive integers with $a<b$ and $gcd(b,a)=1$. Then
  $$f_{b/a}(n)=\frac{b}{b+1}n+O\(\log n\).$$
\end{theorem}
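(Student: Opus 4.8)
The plan is to reduce the extremal problem to a maximum-independent-set computation on an explicit, simply structured graph. First I would record the exact shape of the forbidden pairs: since $\gcd(a,b)=1$, a solution of $bx=ay$ forces $a\mid x$ and $b\mid y$, so the solutions are precisely the pairs $\{ak,bk\}$ with $k\ge 1$ and $bk\le n$. Thus a set $A\subseteq[n]$ is $(b/a)$-multiple-free if and only if it is an independent set in the \emph{conflict graph} $G_n$ on vertex set $[n]$ whose edges are exactly these pairs $\{ak,bk\}$.

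Next I would analyze the structure of $G_n$. Every vertex $m$ has at most two neighbors, namely $mb/a$ (present when $a\mid m$ and $mb/a\le n$) and $ma/b$ (present when $b\mid m$), so $G_n$ has maximum degree $2$ and is a disjoint union of paths and cycles. There are no cycles: traversing an edge multiplies the vertex value by $b/a$ or by $a/b$, so the largest vertex of a putative cycle would need both incident edges to decrease its value, whereas a vertex has only one smaller neighbor $ma/b$. Hence $G_n$ is a disjoint union of paths (``chains''), each of the form $c,\,c(b/a),\,c(b/a)^2,\dots$ with bottom $c$ satisfying $b\nmid c$. The crucial observation is that the position of $m$ within its chain, counted from the bottom starting at $0$, equals the number of times $m$ can be divided by $b/a$ while staying a positive integer, which---since $\gcd(a,b)=1$---is exactly $v_b(m):=\max\{j:b^j\mid m\}$, because $m(a/b)^j\in\ZZ$ if and only if $b^j\mid m$.

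With this in hand the extremal set is explicit. In a path on $\ell$ vertices a maximum independent set has size $\lceil\ell/2\rceil$ and is obtained by taking the vertices at even distance from one end; doing this from the bottom of every chain, the selected vertices are exactly those $m\in[n]$ with $v_b(m)$ even, and since the chains are vertex-disjoint this yields a global maximum independent set. Therefore
\[
 f_{b/a}(n)=\#\{m\in[n]:v_b(m)\text{ even}\}=\sum_{\substack{j\ge 0\\ j\text{ even}}}\Big(\big\lfloor n/b^{\,j}\big\rfloor-\big\lfloor n/b^{\,j+1}\big\rfloor\Big),
\]
using that $v_b(m)=j$ iff $b^j\mid m$ and $b^{j+1}\nmid m$. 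Only $O(\log n)$ summands are nonzero, so replacing each floor by its argument costs $O(\log n)$ in total, and summing the two resulting geometric series gives $n\cdot\frac{1-1/b}{1-1/b^{2}}=\frac{b}{b+1}\,n$, which proves $f_{b/a}(n)=\frac{b}{b+1}n+O(\log n)$.

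The steps that require the most care are the structural ones in the second paragraph---checking that $G_n$ contains no cycle and, above all, identifying the within-chain index with $v_b(m)$, which is exactly what turns the extremal count into the clean divisibility count $\#\{m:v_b(m)\text{ even}\}$. Once that identity is in place the remaining estimate is a routine geometric-series computation with floor-function error control. I would also expect this chain decomposition, together with the parity of $v_b(m)$, to be the natural starting point for the random version in the abstract, where one must control how the independence structure on each chain survives the random sampling $[n]_p$.
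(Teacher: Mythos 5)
Your proof is correct and takes essentially the same approach as the paper: although the paper only cites this theorem from Wakeham and Wood, its Sections 2--3 run exactly your argument (generalized to the random setting), with your conflict graph being the undirected version of the paper's digraph $D$ whose components are directed paths. In particular, your identification of the within-chain position with the valuation $v_b(m)$ is the paper's classification by $i$-th subpowers of $b$ (the sets $T_i$), and your count of the integers in $[n]$ with even $b$-adic valuation, evaluated by geometric series with $O(\log n)$ floor error, is precisely the paper's computation of $|T^*|$ specialized to $p=1$.
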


We shall investigate the maximum size of constant-multiple-free sets contained in a random subset of $[n]$. Let $[n]_p$ be a random subset of $[n]$ obtained by choosing each element in $[n]$ independently with probability $p$. 
Let $f_r([n]_p)$ denote the maximum size of $r$-multiple-free sets contained in $[n]_p$.
We are interested in the behavior of $f_r([n]_p)$ for every rational number $r>1$.

Theorem~\ref{thm:dense} gives the answer of the above question for the case $p=1$. On the other hand, if $p=o(1)$, then the usual deletion methods give that \textit{with high probability} (that is, with probability that goes to 1 as $n\to \infty$) the maximum size of $(b/a)$-multiple-free sets contained in $[n]_p$ is $np(1-o(1))$. Hence, from now on, we consider $p$ as a real number with $0<p<1$.

Using Chernoff bounds (for example, see Lemma~\ref{lem:Chernoff}), Theorem~\ref{thm:dense} easily implies the following:
\begin{fact}\label{fact:lower}Let $p\in (0,1)$ and let $a, b$ be positive integers such that $a<b$ and $gcd(a,b)=1$. Let $\omega$ be a function of $n$ that goes to $\infty$ arbitrarily slowly as $n\to \infty$. With high probability, there is a $(b/a)$-multiple-free set in $[n]_p$ of size
$$ \frac{b}{b+1}pn+\omega\sqrt{pn}.$$
\end{fact}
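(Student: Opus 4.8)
The plan is to build the desired set in two layers: first restrict the dense extremal set from Theorem~\ref{thm:dense}, and then add back the integers that the random deletions have rendered harmless. First I would fix a $(b/a)$-multiple-free set $A\subseteq[n]$ with $|A|\ge\frac{b}{b+1}n-c\log n$, whose existence is guaranteed by Theorem~\ref{thm:dense}. Because every subset of a $(b/a)$-multiple-free set is again $(b/a)$-multiple-free, the restriction $A\cap[n]_p$ is automatically admissible, and $|A\cap[n]_p|$ is a sum of $|A|$ independent Bernoulli($p$) variables. Chernoff's inequality (Lemma~\ref{lem:Chernoff}) therefore pins $|A\cap[n]_p|$ to its mean $p|A|=\frac{b}{b+1}pn+O(p\log n)$, with deviation $O(\sqrt{pn}\log n)$ with high probability.

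The hard part is that this restriction by itself cannot reach the target: by Theorem~\ref{thm:dense} every $(b/a)$-multiple-free subset of $[n]$ has size at most $\frac{b}{b+1}n+O(\log n)$, so the intersection of any \emph{fixed} dense extremal set with $[n]_p$ has mean at most $\frac{b}{b+1}pn+O(p\log n)$ and symmetric fluctuations of order $\sqrt{pn}$; one can only certify $|A\cap[n]_p|\ge\frac{b}{b+1}pn-\omega\sqrt{pn}$, the wrong side of the claim. To move to the $+\,\omega\sqrt{pn}$ side I would harvest a surplus from the deleted coordinates. Each $x\in[n]$ has at most two \emph{partners} under the relation $rx=y$, namely $(b/a)x$ when $a\mid x$ and $(b/a)x\le n$, and $(a/b)x$ when $b\mid x$; set
\begin{equation*}
S=\bigl\{\, x\in[n]_p\setminus A \;:\; \text{every partner of } x \text{ lies outside } [n]_p \,\bigr\}.
\end{equation*}
A one-line case analysis shows $(A\cap[n]_p)\cup S$ is still $(b/a)$-multiple-free: any forbidden pair containing some $x\in S$ would have to pair $x$ with one of its partners, but all partners of $x$ are deleted, so they lie neither in $S$ nor in $A\cap[n]_p$.

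It remains to control $|S|$. Since $|[n]\setminus A|\ge\frac{1}{b+1}n-O(\log n)$ and each $x$ has at most two partners, every $x\in[n]\setminus A$ enters $S$ with probability at least $p(1-p)^2$, so $\EE|S|\ge c'pn$ for a constant $c'=c'(b,p)>0$ once $n$ is large. The indicators defining $|S|$ are only mildly dependent---each integer influences at most three of them---so $|S|$ has bounded differences and a McDiarmid-type inequality (equivalently, splitting $[n]$ into $O(1)$ colour classes with pairwise disjoint partner-neighbourhoods and applying Chernoff on each class) concentrates $|S|$ to within $O(\sqrt{n})$ of its mean. Combining the two layers, with high probability
\begin{equation*}
\bigl|(A\cap[n]_p)\cup S\bigr| \;\ge\; \frac{b}{b+1}pn + c'pn - O\bigl(\sqrt{pn}\,\log n\bigr) \;\ge\; \frac{b}{b+1}pn + \omega\sqrt{pn},
\end{equation*}
because the genuine gain $c'pn$ dominates both the concentration error and $\omega\sqrt{pn}$ for any slowly growing $\omega$. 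The principal obstacle is thus conceptual rather than computational: one must recognise that no single dense extremal set can beat the mean $\frac{b}{b+1}pn$, and that the required surplus has to be extracted from the deletions through the augmentation $S$; the only technical nuisance is the weak dependence among the defining indicators, which a bounded-difference bound disposes of. In fact, optimising the independent set chain-by-chain rather than through this crude $S$ would already recover the sharp constant $\frac{b}{b+p}$ of the main theorem, but for the present Fact the rough gain suffices.
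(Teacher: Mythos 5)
Your proof is correct, and it takes a genuinely different --- in fact necessarily stronger --- route than the paper. The paper gives no written proof of Fact~\ref{fact:lower}: it asserts that Theorem~\ref{thm:dense} together with Chernoff bounds ``easily implies'' it, which is precisely your first layer (fix an extremal $(b/a)$-multiple-free set $A\subseteq[n]$ and intersect it with $[n]_p$). As you correctly observe, that argument can only certify the bound $\frac{b}{b+1}pn-\omega\sqrt{pn}$: by the upper-bound half of Theorem~\ref{thm:dense}, any \emph{fixed} multiple-free set meets $[n]_p$ in a binomial variable whose mean is at most $\frac{b}{b+1}pn+O(\log n)$, and a binomial variable exceeds its mean by $\omega$ standard deviations only with probability $o(1)$. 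So the plus sign in the Fact as printed is either a sign slip or genuinely requires a set chosen \emph{after} the randomness is revealed --- which is exactly what your augmentation supplies. Your details check out: $(A\cap[n]_p)\cup S$ is multiple-free, since any forbidden pair through $x\in S$ would involve a partner of $x$, all of which are deleted; $S$ is disjoint from $A\cap[n]_p$; $\EE|S|\ge p(1-p)^2\bigl(\tfrac{1}{b+1}n-O(\log n)\bigr)=\Omega_{b,p}(n)$ because each element has at most two partners; and the bounded-differences (or $3$-colouring plus Chernoff) concentration is legitimate because each integer affects at most three of the indicators. The resulting $\Theta(pn)$ surplus --- which, fittingly, vanishes as $p\to 1$, where the dense bound forbids any surplus --- dominates both the $O(\sqrt{pn}\log n)$ fluctuation and $\omega\sqrt{pn}$. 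Two minor quibbles: the whp concentration of $|S|$ should be stated as within $\omega'\sqrt{n}$ for some $\omega'\to\infty$ rather than $O(\sqrt{n})$ (harmless, given the $\Theta(pn)$ slack), and one should note that ``$\omega$ grows arbitrarily slowly'' is used to guarantee $\omega\le c\sqrt{pn}$. In summary, the paper's route buys a one-line derivation but only of the minus-sign bound (which would still serve its stated purpose of exhibiting a lower bound lying $\Theta(pn)$ below the true value $\frac{b}{b+p}pn$), while your route costs an extra paragraph and proves the statement as actually written, without invoking the chain-by-chain machinery of the main theorem.
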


Fact~\ref{fact:lower} gives a lower bound on $f_{b/a}([n]_p)$ that is off from the right value of $f_{b/a}([n]_p)$.  The main result of this paper is the following:

\begin{theorem}\label{thm:main}Let $p\in (0,1)$ and let $a, b$ be positive integers such that $a<b$ and $gcd(a,b)=1$. Then, with high probability, $$f_{b/a}\([n]_p\)=\frac{b}{b+p}pn+O\(\sqrt{pn}\log n\log\log n\).$$
\end{theorem}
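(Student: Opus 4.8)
The plan is to translate the extremal problem into one about independent sets in a union of paths, compute the expectation exactly, and then prove concentration. Form the graph $G$ on vertex set $[n]$ whose edges are the pairs $\{ak,bk\}$ with $bk\le n$. Since $\gcd(a,b)=1$, the integer solutions of $bx=ay$ are exactly $(x,y)=(ak,bk)$, so $A\subseteq[n]$ is $(b/a)$-multiple-free if and only if $A$ is an independent set of $G$. Each vertex $m$ has at most two neighbours, $(b/a)m$ (present when $a\mid m$ and $(b/a)m\le n$) and $(a/b)m$ (present when $b\mid m$), and $G$ has no cycle because $b/a>1$; hence $G$ is a disjoint union of paths, which I call \emph{chains}. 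The chain of a given orbit is determined by its minimal integer element $c$, the unique member with $b\nmid c$, and equals $\{c,(b/a)c,\dots,(b/a)^{L}c\}$ with $L=\min\{v_a(c),\lfloor\log_{b/a}(n/c)\rfloor\}$, where $v_a(c)$ is the largest exponent with $a^{v_a(c)}\mid c$. As the chains are vertex-disjoint and the events $\{i\in[n]_p\}$ independent,
\begin{equation*}
f_{b/a}([n]_p)=\sum_{C}\mathrm{MIS}\big(C\cap[n]_p\big)=\big|[n]_p\big|-\sum_{C}\mathrm{MVC}\big(C\cap[n]_p\big),
\end{equation*}
a sum of \emph{independent} contributions indexed by the chains $C$, where $\mathrm{MIS}$ and $\mathrm{MVC}$ are the maximum independent set and minimum vertex cover of the sub-path induced on the surviving vertices.

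Next I would count chains by length and evaluate the mean. A chain has length at least $m$ exactly when its minimal element is $c=a^{m-1}c'$ with $b\nmid c'$ and $c'\le n/b^{m-1}$, so the number of chains of length $\ge m$ is $\tfrac{(b-1)n}{b^{m}}+O(1)$; these are the same counts that underlie Theorem~\ref{thm:dense}. For a single induced path, $\mathrm{MVC}$ equals the sum over the maximal runs of surviving vertices of $\lfloor(\text{run length})/2\rfloor$, and using $\lfloor t/2\rfloor=\sum_{i\ge1}\mathbf 1[t\ge 2i]$ reduces $\EE[\mathrm{MVC}]$ to counting, for each $i$, the expected number of runs of length $\ge 2i$. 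Charging each such run to its first vertex (which survives-and-extends with probability $p^{2i}$ when it starts a chain and $(1-p)p^{2i}$ otherwise), together with the chain-length statistics above, yields
\begin{equation*}
\EE\Big[\sum_{C}\mathrm{MVC}\big(C\cap[n]_p\big)\Big]=\sum_{i\ge1}\Big(\tfrac{n}{b^{2i}}(1-p)+\tfrac{(b-1)n}{b^{2i}}\Big)p^{2i}+O(\log n)=\frac{p^{2}n}{b+p}+O(\log n),
\end{equation*}
the geometric series summing cleanly because $b^{2}-p^{2}=(b-p)(b+p)$. Since $\EE|[n]_p|=pn$, this gives $\EE f_{b/a}([n]_p)=\frac{b}{b+p}\,pn+O(\log n)$, pinning down the main term.

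It remains to concentrate $f_{b/a}([n]_p)$ about this mean. The term $|[n]_p|$ is binomial and, by the Chernoff bound (Lemma~\ref{lem:Chernoff}), lies within $O(\sqrt{pn\log n})$ of $pn$ with high probability. For the cover $M:=\sum_{C}\mathrm{MVC}(C\cap[n]_p)$ I would expose the indicators $\mathbf 1[i\in[n]_p]$ one vertex at a time: altering a single vertex changes $M$ by at most $1$, and it changes $M$ at all only when that vertex has a surviving neighbour, an event of probability $O(p)$. Hence the associated martingale has increments bounded by $1$ and summed conditional variance $O(pn)$, and Freedman's (Bernstein-type) inequality confines $M$ to within $O(\sqrt{pn\log n})$ of its mean with high probability. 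Combining the two deviations proves Theorem~\ref{thm:main}, the precise power of $\log$ depending only on the concentration inequality invoked; the slightly weaker $\sqrt{pn}\,\log n\,\log\log n$ in the statement is comfortably absorbed.

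The crux is this concentration at scale $\sqrt{pn}$ rather than the $\sqrt{n\log n}$ that a plain bounded-differences (Azuma) estimate would give from viewing $f_{b/a}([n]_p)$ as a $1$-Lipschitz function of $n$ independent bits. Gaining the factor $\sqrt{p}$ requires genuinely using that almost every coordinate has negligible influence on the cover, i.e.\ the variance-sensitive content of Freedman/Bernstein, and the analogous observation within each long chain (whose cover is itself a sum of many nearly-independent run-contributions and so does not fluctuate on the scale of its length). Keeping the $O(1)$ error in the chain counts uniform across the $O(\log n)$ length scales, and verifying that the conditional-variance bound really is $O(p)$ per vertex uniformly, are the places where care is needed; everything else is the routine geometric-series bookkeeping indicated above.
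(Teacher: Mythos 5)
Your proposal is correct, and it reaches the theorem by a genuinely different route than the paper, even though both start from the same structural observation that the graph on $[n]$ with edges $\{ak,bk\}$ is a disjoint union of paths. The paper works directly with the maximum independent set: it decomposes it by levels, letting $T^*_i$ be the set of $i$-th subpowers of $b$ lying at even distance from the first surviving vertex of their component, computes $\EE|T^*_i|$ by a per-vertex geometric series, and gets concentration by noting that within a fixed level the indicators are independent (distinct vertices of $T_i$ lie in distinct chains), applying a Chernoff bound to each level separately, and union-bounding over the $O(\log n)$ levels --- that union bound is exactly where the $\log n\log\log n$ factor in the stated error comes from. You instead complement via Gallai's identity, writing $f_{b/a}([n]_p)=|[n]_p|-\sum_C \mathrm{MVC}(C\cap[n]_p)$, compute the expected cover by counting runs (your geometric series correctly gives $p^2n/(b+p)+O(\log n)$, hence the main term $\frac{b}{b+p}pn$), and concentrate the cover in a single martingale step; this avoids the level decomposition entirely and yields the stronger deviation $O(\sqrt{pn\log n})$, which comfortably implies the stated bound. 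What each approach buys: the paper's argument needs nothing beyond Chernoff for sums of independent indicators, at the price of the level-by-level bookkeeping and a slightly lossier error term; yours needs a martingale inequality but is structurally cleaner and sharper. Two minor remarks on your concentration step. First, the per-increment conditional variance bound $O(p)$ follows already from $1$-Lipschitzness alone: writing the $k$-th increment as $(X_k-p)\Delta_k$ with $\Delta_k$ measurable with respect to the past and $|\Delta_k|\le 1$, one gets conditional variance $\Delta_k^2\,p(1-p)\le p$; your ``surviving neighbour'' observation is not needed for this (it would only be relevant for improving to $O(p^2)$). Second, since the theorem fixes $p\in(0,1)$ as a constant, plain Azuma already gives deviation $O(\sqrt{n\log n})=O(\sqrt{pn\log n})$, so the Freedman refinement you identify as the crux is not actually required for the statement as written; it would matter only if one insisted on bounds uniform as $p\to 0$.
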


\begin{figure}
\begin{center}
\includegraphics[scale=0.15]{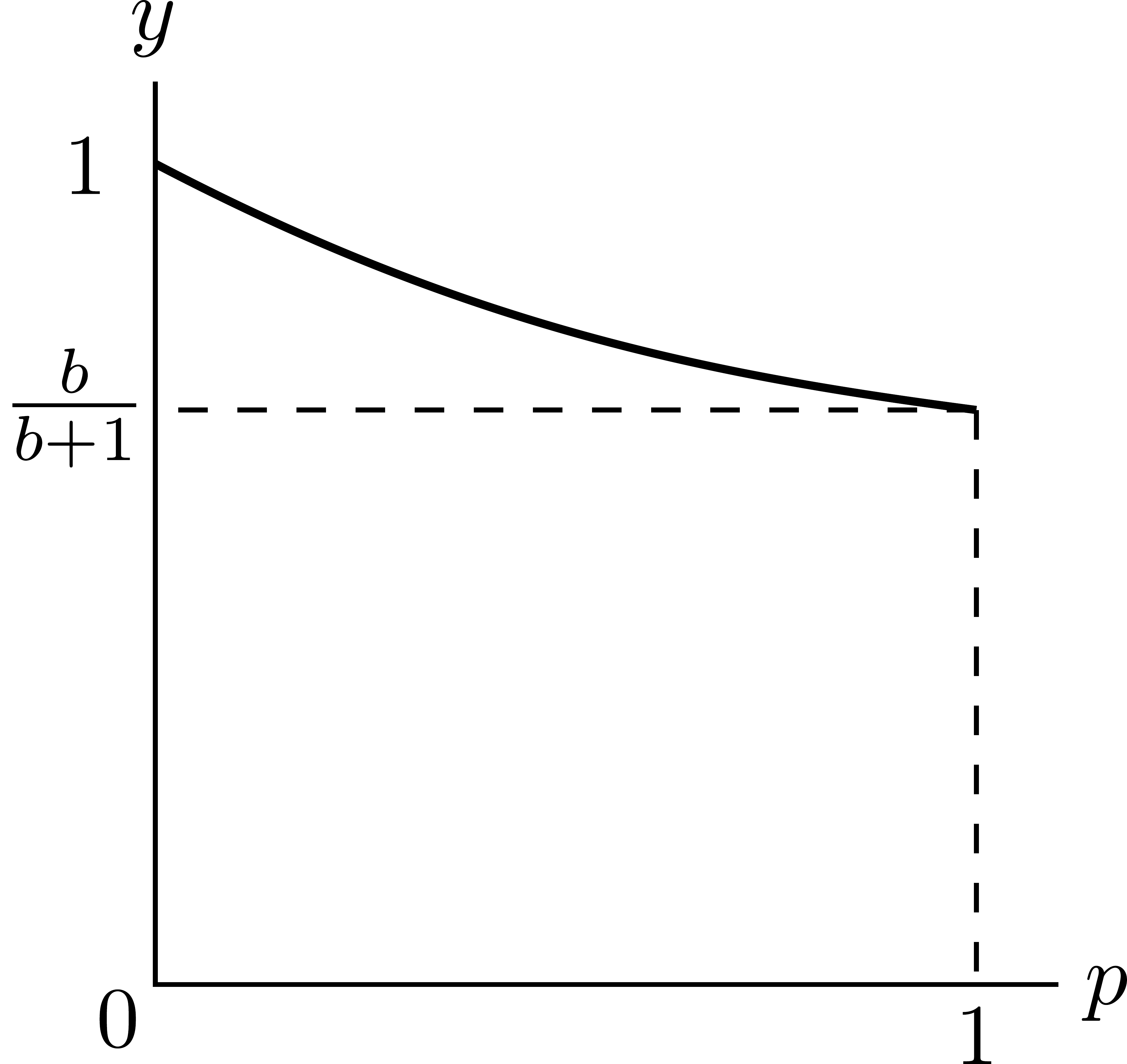}
\end{center}
\caption{The graph of $y=b/(b+p)$ for $0\leq p\leq 1$}
\label{fig:p_function}
\end{figure}

The ratio $ {f_{b/a}([n]_p) \over np}$ goes from $1$ to $b\over b+1$ as
p varies from $0$ to $1$ (See Figure~\ref{fig:p_function}).
The proof of Theorem~\ref{thm:main} is given in Sections~\ref{sec:main theorem} and~\ref{sec:main lemma} by using a graph theoretic method.

\section{Proof of Theorem~\ref{thm:main}}
\label{sec:main theorem}

In order to show Theorem~\ref{thm:main}, we use a graph theoretic approach that was used in Wakeham and Wood~\cite{Wakeham}. Let $r=b/a>1$ be a rational number. 
Let $D=(V,E)$ be the directed graph with the vertex set $V=[n]$ in which the set $E$ of arcs (or directed edges) is $\{(x,y) :\; rx=y\}.$
Let $D[[n]_p]$ be the subgraph of $D$ induced on $[n]_p$.
Observe that $f_r([n]_p)$ is the same as the independence number $\alpha(D[[n]_p])$ of $D[[n]_p]$. 

We consider structures of $D[[n]_p]$. The indegree and outdegree of each vertex in $D$ are at most $1$. Also, there is no directed cycle in $D$ because $(x,y)\in E$ implies $x<y$. Therefore, each component of $D$ or $D[[n]_p]$ is a directed path.

In order to obtain an independent set of $D[[n]_p]$ of maximum size, we consider such an independent set componentwise. Let $C$ be a component of $D[[n]_p]$. As we mentioned above, $C$ is a directed path. Let $V(C)=\{u_0, u_1, u_2, \cdots, u_i, \cdots, u_l\}$ be the vertex set of $C$ such that $u_j<u_{j+1}$ and $(u_j, u_{j+1})\in E$ for $0\leq j \leq l-1$.
Observe that $V^*(C):=\{u_0, u_2, u_4, \cdots\}\subset V(C)$ forms an independent set of $C$ of maximum size. 
Therefore, the set $$\displaystyle T^*:=\bigcup_{C}V^*(C),$$ where $C$ is each component of $D[[n]_p]$, forms an independent set of $D[[n]_p]$ of maximum size. Hence, we have the following.
\begin{lemma}
$f_r([n]_p)=|T^*|.$
\end{lemma}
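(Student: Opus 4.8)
The plan is to verify rigorously the two assertions made in the discussion preceding the statement: that $V^*(C)$ is a maximum independent set in each path component $C$, and that maximum independent sets of $D[[n]_p]$ can be assembled component by component. Once both are in hand, the equality $f_r([n]_p)=|T^*|$ follows at once, since it was already observed that $f_r([n]_p)=\alpha(D[[n]_p])$.

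First I would record that independence is a componentwise property. Because every arc of $D[[n]_p]$ lies inside a single component, a subset $S$ of the vertex set is independent in $D[[n]_p]$ if and only if $S\cap V(C)$ is independent in $C$ for every component $C$. Consequently any independent set is a disjoint union of independent sets of the components, and $\alpha(D[[n]_p])=\sum_{C}\alpha(C)$, where the sum ranges over all components $C$.

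Next I would pin down $\alpha(C)$ for a path component $C$ with vertex sequence $u_0<u_1<\cdots<u_l$. For the lower bound, note that $V^*(C)=\{u_0,u_2,u_4,\dots\}$ contains no two vertices $u_j,u_{j+1}$ with consecutive indices, and since the arcs of $C$ join only such consecutive pairs, $V^*(C)$ is independent; hence $\alpha(C)\geq |V^*(C)|=\lceil (l+1)/2\rceil$. For the matching upper bound, I would partition $V(C)$ into the $\lceil (l+1)/2\rceil$ blocks $\{u_0,u_1\},\{u_2,u_3\},\dots$ (the last block being a singleton when $l+1$ is odd). The two vertices in any full block are joined by an arc, so an independent set can meet each block in at most one vertex, giving $\alpha(C)\leq \lceil (l+1)/2\rceil=|V^*(C)|$. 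Therefore $\alpha(C)=|V^*(C)|$ and $V^*(C)$ is a maximum independent set of $C$.

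Finally, since distinct components are vertex-disjoint, $T^*=\bigcup_{C}V^*(C)$ is independent in $D[[n]_p]$ with $|T^*|=\sum_{C}|V^*(C)|=\sum_{C}\alpha(C)=\alpha(D[[n]_p])=f_r([n]_p)$, which is the claimed identity. There is no genuine obstacle here; the only point that warrants care is the upper bound $\alpha(C)\leq\lceil (l+1)/2\rceil$, and the block partition disposes of it cleanly.
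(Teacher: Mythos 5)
Your proof is correct and follows essentially the same route as the paper: the paper's argument is exactly the observation that each component of $D[[n]_p]$ is a directed path, that the alternate vertices $V^*(C)$ form a maximum independent set of each path $C$, and that these assemble componentwise into a maximum independent set $T^*$ of $D[[n]_p]$, whence $f_r([n]_p)=\alpha(D[[n]_p])=|T^*|$. You merely make explicit two details the paper leaves implicit (the block-partition upper bound $\alpha(C)\leq\lceil (l+1)/2\rceil$ and the additivity of $\alpha$ over components), which is a welcome but not substantively different elaboration.
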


Thus, in order to show Theorem~\ref{thm:main}, it suffices to show the following.
\begin{lemma}\label{lem:main}Let $p\in (0,1)$ and let $a, b$ be natural numbers such that $a<b$ and $gcd(a,b)=1$. Then, with high probability, $$|T^*|=\frac{b}{b+p}pn+O\(\sqrt{pn}\log n\log\log n\).$$
\end{lemma}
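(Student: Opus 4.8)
The plan is to compute $\EE|T^*|$ up to lower-order terms and then prove concentration, using a clean formula for the independence number of a sampled path. Fix a component $C$ of $D$; as explained above it is a directed path on $L_C:=|V(C)|$ vertices, and $\{X_i:=\mathbf 1[i\in[n]_p]\}_{i\in V(C)}$ are i.i.d.\ Bernoulli$(p)$. The surviving vertices break the path into maximal blocks of consecutive survivors (\emph{runs}); if these have lengths $k_1,k_2,\dots$, then the induced subgraph is a disjoint union of paths and its maximum independent set has size $\sum_i\lceil k_i/2\rceil=\tfrac12\big(\sum_i k_i+\#\{i:k_i\text{ odd}\}\big)$. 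Summing over all components and writing $N_{\mathrm{odd}}$ for the total number of odd runs in $D[[n]_p]$, I obtain the exact identity
\[
|T^*|=\tfrac12\big(|[n]_p|+N_{\mathrm{odd}}\big).
\]
Thus it suffices to control $|[n]_p|\sim\mathrm{Bin}(n,p)$ (immediate from Chernoff, Lemma~\ref{lem:Chernoff}) and to compute the mean and fluctuations of $N_{\mathrm{odd}}$.

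For the first moment I would first pin down the length distribution of the components. Each maximal path can be written uniquely as $\{a^{\ell-j}b^{j}m:0\le j\le\ell\}$, where $b\nmid m$ is the source condition and the sink condition ($a\nmid m$, or $a\mid m$ with $b^{\ell+1}m/a>n$) holds; since $\gcd(a,b)=1$ these are pure divisibility constraints. A short inclusion--exclusion on divisibility by $a$ and $b$, together with the truncation $b^\ell m\le n$, shows that the number $P_\ell$ of components with exactly $\ell+1$ vertices satisfies $P_\ell=\frac{(b-1)^2}{b^2}\,\frac{n}{b^{\ell}}+O(1)$, with $\ell\le\log_{b/a}n=O(\log n)$. (As a check, $\sum_\ell(\ell+1)P_\ell=n$ and $\sum_\ell P_\ell=(1-1/b)n$, the number of sources.) Expanding $\EE[N_{\mathrm{odd}}]=\sum_C\EE[N_{\mathrm{odd}}(C)]$ by counting runs of each odd length inside a path on $L$ vertices, and then summing the resulting geometric series against the weights $P_\ell$, gives $\EE[N_{\mathrm{odd}}]=\frac{b-p}{b+p}\,pn+O(\mathrm{polylog}\,n)$, where the error absorbs the $O(1)$ terms in $P_\ell$ (at most $O(\log n)$ miscounted components, each contributing $O(\log n)$) and the truncation near $n$. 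Plugging this into the identity yields $\EE|T^*|=\tfrac12\big(pn+\frac{b-p}{b+p}pn\big)=\frac{b}{b+p}\,pn+O(\mathrm{polylog}\,n)$, the claimed main term.

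For concentration the decisive observation is that the components of $D$ are vertex-disjoint, so $N_{\mathrm{odd}}=\sum_C N_{\mathrm{odd}}(C)$ is a sum of \emph{independent} random variables, each supported on $[0,\lceil L_C/2\rceil]$ with $\max_C L_C=O(\log n)$. The per-component variance carries the right power of $p$: flipping a single indicator $X_i$ changes $N_{\mathrm{odd}}(C)$ by $O(1)$, and $\EE[(N_{\mathrm{odd}}(C)-N_{\mathrm{odd}}(C)')^2]=O(p)$ for each coordinate (a nonzero change forces $X_i\neq X_i'$, an event of probability $2p(1-p)$), so Efron--Stein gives $\mathrm{Var}\big(N_{\mathrm{odd}}(C)\big)=O(pL_C)$ and hence $\sum_C\mathrm{Var}=O(pn)$. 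Bernstein's inequality, with variance proxy $O(pn)$ and summand bound $O(\log n)$, then comfortably yields $|N_{\mathrm{odd}}-\EE N_{\mathrm{odd}}|=O(\sqrt{pn}\,\log n\log\log n)$ with high probability; combining this with $|[n]_p|=pn+O(\sqrt{pn\log n})$ from Chernoff and the mean computation proves $|T^*|=\frac{b}{b+p}pn+O(\sqrt{pn}\log n\log\log n)$.

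I expect the main obstacle to be obtaining the sharp $\sqrt{pn}$ (rather than $\sqrt n$) scale in the concentration step: a naive bounded-differences/McDiarmid estimate with unit differences only gives fluctuations of order $\sqrt n$, which is too weak when $p$ is small, so one genuinely needs the variance-sensitive bound above, built on the facts that distinct components are independent, that each has only $O(\log n)$ vertices, and that each coordinate's influence on $N_{\mathrm{odd}}$ has second moment $O(p)$. A secondary technical nuisance is the exact number-theoretic bookkeeping of $P_\ell$ --- the floors from inclusion--exclusion and the truncation of long paths near $n$ --- but these perturb the mean only by $O(\mathrm{polylog}\,n)$ and so sit comfortably inside the target error.
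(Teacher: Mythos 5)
Your proposal is correct in substance, but it follows a genuinely different route from the paper, so let me compare the two. The paper stratifies $T^*$ by the subpower index: $T^*=\bigsqcup_i T^*_i$, where $T^*_i$ collects the $i$-th subpowers of $b$ lying at even distance from the bottom of their component; for each $i$ it computes $\Pr[v\in T^*_i]$ by a one-line geometric sum over where the component of $D[[n]_p]$ below $v$ starts, notes that distinct vertices of $T_i$ lie in distinct components of $D$ (so the indicators within one stratum are independent), and applies Chernoff to each $|T^*_i|$ followed by a union bound over the $O(\log n)$ strata --- that union bound is precisely where the paper's $\log n\log\log n$ factor comes from. You instead use the exact identity $|T^*|=\tfrac12\bigl(|[n]_p|+N_{\mathrm{odd}}\bigr)$ (valid since the components of $D[[n]_p]$ are the runs, and a $k$-vertex path has independence number $\lceil k/2\rceil$), reduce the mean to the component-length distribution $P_\ell=\frac{(b-1)^2}{b^2}\frac{n}{b^\ell}+O(1)$ (your source/sink divisibility analysis and this count are correct; note the boundary terms of each path contribute at order $\Theta(n)$ in total, which is exactly why the answer depends on $b$, and your claimed $\EE N_{\mathrm{odd}}=\frac{b-p}{b+p}pn+O(\mathrm{polylog}\,n)$ is the unique value consistent with the dense case $p=1$, where it gives $\frac{b-1}{b+1}n$), and then get concentration from independence \emph{across} components plus Efron--Stein ($\mathrm{Var}=O(pn)$) and Bernstein with summand bound $O(\log n)$. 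What each buys: the paper's argument is more elementary --- nothing beyond Chernoff for Bernoulli sums, and a per-vertex probability that is trivial to compute --- while your first-moment computation (summing run statistics of an $L$-vertex path against $P_\ell$) is more laborious and is only sketched in your write-up; this is the one place you would need to supply the full geometric-series bookkeeping. In exchange, your concentration step is done once rather than $O(\log n)$ times, and since Bernstein with variance proxy $O(pn)$ and summand bound $O(\log n)$ controls deviations at scale $\sqrt{pn}\,\omega$ for \emph{any} $\omega\to\infty$ (for fixed $p$, the variance term dominates), your route actually yields a slightly sharper error term, $O\bigl(\sqrt{pn}\,\omega+(\log n)^2\bigr)$, improving on the paper's $O(\sqrt{pn}\log n\log\log n)$. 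Your closing remark is also on target: a plain McDiarmid bound over all $n$ coordinates would lose the factor $\sqrt{p}$, and the variance-sensitive bound (or, in the paper, Chernoff applied to means of size $O(pn)$) is what keeps the fluctuation scale at $\sqrt{pn}$.
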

The proof of Lemma~\ref{lem:main} is given in Section~\ref{sec:main lemma}.

\section{Proof of Lemma~\ref{lem:main}}\label{sec:main lemma}

From now on, we show Lemma~\ref{lem:main}.
For positive integers $b$ and $k$, let $k$ be an $i$-th subpower of $b$ if $k=b^i l$ for some $l\not\equiv 0$ (mod $b$). Let $T_i$ be the set of $i$-th subpowers of $b$ in $[n]$.
Let $T^*_i\subset T_i$ denote the set of $i$-th subpowers $v$ of $b$ in $[n]_p$ such that $v$ is at an even distance from the smallest vertex of the component of $D[[n]_p]$ containing $v$. Observe that $T^*=\bigsqcup_i T^*_i,$ and hence, 
\begin{equation}
\label{eq:T^*}
|T^*|=\sum_i |T^*_i|.
\end{equation}

In Section~\ref{ssec:expectation}, we estimate the expected value $\EE(|T^*|)$. Section~\ref{ssec:concentration} deals with a concentration result of $|T^*|$ with high probability.

\subsection{Expectation}\label{ssec:expectation}
We first estimate $\EE\(|T^*_i|\)$ and their sum $\EE\(|T^*|\)$.
Recall that $T_i$ denotes the set of $i$-th subpowers of $b$ in $[n]$. Note that since $1\leq b^i\leq n$, the range of $i$ is $0\leq i\leq \log_b n$. It is clear that
$$T_i=\Big\{b^ix \;\big|\; 1\leq x\leq \frac{n}{b^i}, \quad x\not\equiv 0 \mbox{ (mod $b$) }\Big\}.$$
Hence we have the following:
\begin{fact}\label{fact:T_i}
\begin{equation}\label{eq:T_i}
|T_i|=\frac{b-1}{b}\frac{n}{b^i}\pm 1.
\end{equation}
\end{fact}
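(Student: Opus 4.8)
The plan is to count $|T_i|$ directly from the explicit description of $T_i$ and then control the two floor functions that appear. Since $b^i \geq 1$, the map $x \mapsto b^i x$ is a bijection from the set $\{x : 1 \leq x \leq n/b^i,\ x \not\equiv 0 \pmod b\}$ onto $T_i$, so $|T_i|$ equals the number of integers $x$ in the range $1 \leq x \leq n/b^i$ that are \emph{not} divisible by $b$. Writing $M := \lfloor n/b^i \rfloor$ for the number of integers in $[1, n/b^i]$, and noting that exactly $\lfloor M/b \rfloor$ of them are multiples of $b$, we obtain the exact identity
$$|T_i| = M - \left\lfloor \frac{M}{b} \right\rfloor.$$

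Next I would strip the floors at the cost of an additive error below $1$ in each step. Write $M = \frac{n}{b^i} - \delta_1$ and $\lfloor M/b \rfloor = \frac{M}{b} - \delta_2$ with $\delta_1, \delta_2 \in [0,1)$. Substituting gives
$$|T_i| = M - \frac{M}{b} + \delta_2 = \frac{b-1}{b}\, M + \delta_2 = \frac{b-1}{b}\frac{n}{b^i} - \frac{b-1}{b}\delta_1 + \delta_2.$$
Since $0 < \frac{b-1}{b} < 1$ and $0 \leq \delta_1, \delta_2 < 1$, the residual term $-\frac{b-1}{b}\delta_1 + \delta_2$ lies strictly in $(-1, 1)$, which is precisely the assertion $|T_i| = \frac{b-1}{b}\frac{n}{b^i} \pm 1$.

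Honestly there is no real obstacle in this fact: it is a routine floor-function estimate, and the only point requiring a little care is ensuring that the two independent rounding errors (one from truncating $n/b^i$ to $M$, one from truncating $M/b$) combine to stay within a single unit rather than accumulating to $2$. The computation above shows they do, because the first error is damped by the factor $\frac{b-1}{b} < 1$. This clean $\pm 1$ bound is exactly what will later be summed over $0 \leq i \leq \log_b n$ when estimating $\EE(|T^*|)$, so keeping the per-level error at $1$ (rather than, say, $2$) is what matters downstream.
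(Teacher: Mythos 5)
Your proof is correct and matches the paper's intent: the paper states this fact without proof, treating it as immediate from the description $T_i=\{b^ix : 1\leq x\leq n/b^i,\ x\not\equiv 0 \ (\mathrm{mod}\ b)\}$, and your argument is exactly the routine floor-function count that justification requires. The observation that the first rounding error is damped by the factor $\frac{b-1}{b}<1$, so the total error stays below $1$, is a nice touch that confirms the $\pm 1$ bound as stated.
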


 We consider two cases separately, based on the parity of $i$.

\begin{lemma}\label{lem:mean_even} For $0\leq j\leq (\log_b n)/2$, we have 
\begin{equation*}
\EE\(|T^*_{2j}|\)=\frac{b-1}{b(1+p)}pn\(\frac{1}{b^{2j}}+\(\frac{p}{b}\)^{2j}p\)\pm 1.
\end{equation*}
\end{lemma}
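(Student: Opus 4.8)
The plan is to evaluate $\EE\(|T^*_{2j}|\)$ by linearity of expectation over the individual $2j$-th subpowers, once the position of such a vertex inside its directed path in $D$ has been pinned down.

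First I would analyze the downward path of a $2j$-th subpower. Fix $v=b^{2j}l\in T_{2j}$ with $b\nmid l$. Since consecutive vertices of a component of $D$ differ by the factor $b/a$, the predecessors of $v$ are the vertices $(a/b)^m v=a^m b^{2j-m}l$ for $1\le m\le 2j$, each a positive integer strictly smaller than $v$ and therefore lying in $[n]$. Writing $c_m=a^{2j-m}b^m l$ for $0\le m\le 2j$, so that $c_{2j}=v$ and $c_0=a^{2j}l$, I obtain a downward path $c_0,c_1,\dots,c_{2j}$ of length exactly $2j$ inside $D\cap[n]$; moreover $b\nmid a^{2j-m}l$ (as $a,b$ are coprime and $b\nmid l$), so $c_m$ is an $m$-th subpower sitting at distance $m$ from the bottom $c_0$. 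The structural fact to record is that \emph{every} $v\in T_{2j}$ carries the same number, namely $2j$, of predecessors inside $[n]$, and these are the only vertices influencing the position of $v$ relative to the minimum of its component.

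Next I would compute $\PP(E_v)$, where $E_v$ is the event that $v\in[n]_p$ and $v$ lies at even distance from the minimum of its component in $D[[n]_p]$. Deleting vertices only chops the path into subpaths, so the minimum of the component of $v$ is reached by descending through $c_{2j-1},c_{2j-2},\dots$ for as long as these survive; thus the distance $d$ from $v$ to that minimum equals the number of consecutive surviving predecessors immediately below $v$. Since the $2j$ predecessors each lie in $[n]_p$ independently with probability $p$, a routine count gives $\PP(d=e)=p^e(1-p)$ for $0\le e\le 2j-1$ and $\PP(d=2j)=p^{2j}$. Summing over even $e$ produces the finite geometric sum $\PP(d\text{ even})=(1+p^{2j+1})/(1+p)$, and multiplying by the independent probability $p$ that $v$ itself is chosen gives $\PP(E_v)=p(1+p^{2j+1})/(1+p)$. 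The crucial point is that this value does not depend on $v$, so $\EE\(|T^*_{2j}|\)=|T_{2j}|\cdot p(1+p^{2j+1})/(1+p)$.

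Finally I would substitute $|T_{2j}|=\frac{b-1}{b}\frac{n}{b^{2j}}\pm 1$ from Fact~\ref{fact:T_i} and rewrite $\frac{1+p^{2j+1}}{b^{2j}}=\frac{1}{b^{2j}}+\(\frac{p}{b}\)^{2j}p$ to reach the stated expression; since the probability factor $p(1+p^{2j+1})/(1+p)$ is at most $p<1$, the $\pm 1$ in $|T_{2j}|$ contributes an error bounded by $1$, which is absorbed into the final $\pm 1$. The only delicate step is the first one: I must verify that a $2j$-th subpower genuinely has its full complement of $2j$ predecessors inside $[n]$, since this is exactly what makes $\PP(E_v)$ uniform over $T_{2j}$ rather than depending on how close $v$ sits to the top of its chain. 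The probabilistic computation itself is then elementary.
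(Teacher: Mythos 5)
Your proposal is correct and follows essentially the same route as the paper: both compute, for a fixed $v\in T_{2j}$, the probability that $v$ survives and sits at even distance from the bottom of its (surviving) chain by conditioning on how many consecutive predecessors of $v$ remain in $[n]_p$, yielding $p\bigl((1-p)(1+p^2+\cdots+p^{2j-2})+p^{2j}\bigr)=p(1+p^{2j+1})/(1+p)$, and then multiply by $|T_{2j}|$ from Fact~\ref{fact:T_i}. Your explicit verification that every $2j$-th subpower $b^{2j}l$ has its full chain of $2j$ integer predecessors $a^{2j-m}b^m l$ inside $[n]$ is a point the paper passes over quickly (``Observe that $v_i\in T_i$, and hence, $v=v_{2j}$''), and spelling it out is a welcome addition rather than a deviation.
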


\begin{proof} First we consider $\Pr \[v\in T_{2j} \mbox{ is in } T^*_{2j}\]$.
Let $\{v_0, v_1, v_2, \cdots \}$, where $v_i<v_{i+1}$, be the vertex set of the component of $D$ containing $v$. Observe that $v_i\in T_{i}$, and hence, $v=v_{2j}$.
The event that $v\in T_{2j}$ is in $T^*_{2j}$ happens only when one of the following holds:
\begin{itemize}
\item There is some $r$ with $0\leq r\leq j-1$ such that $v_{2j-1-2r}\not\in [n]_p$ and $v_i\in [n]_p$ for all $ 2j-2r \leq i\leq 2j$.
\item The vertices $v_0,v_1,\cdots, v_{2j}$ are in $[n]_p$.
\end{itemize}
Hence, we have 
\begin{equation}\label{eq:prob}
\Pr \[v\in T_{2j} \mbox{ is in } T^*_{2j}\]=p\((1-p)+p^2(1-p)+\cdots +p^{2j-2}(1-p)+p^{2j}\).
\end{equation}

Thus we infer
\begin{eqnarray*}
\EE\(|T^*_{2j}|\)&=&|T_{2j}|\cdot\Pr \[v\in T_{2j} \mbox{ is in } T^*_{2j}\] \\ 
&\overset{\eqref{eq:T_i}, \eqref{eq:prob}}{=}&\(\frac{b-1}{b}\frac{n}{b^{2j}}\pm 1\)p\((1-p)\frac{1-p^{2j}}{1-p^2}+p^{2j}\) \\
 &=&\frac{b-1}{b(1+p)}pn\(\frac{1}{b^{2j}}+\frac{p^{2j}}{b^{2j}}p\)\pm 1,
\end{eqnarray*}
which completes the proof of Lemma~\ref{lem:mean_even}.
\end{proof}

\begin{lemma}\label{lem:mean_odd} For $1\leq j\leq (\log_b n)/2$, we have 
\begin{equation*}
\EE\(|T^*_{2j-1}|\) = \frac{b-1}{b(1+p)}pn\(\frac{1}{b^{2j-1}}-\(\frac{p}{b}\)^{2j-1}p\) \pm 1.
\end{equation*}
\end{lemma}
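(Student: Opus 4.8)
The plan is to mirror the proof of Lemma~\ref{lem:mean_even}, the only genuine difference being a parity effect at the bottom endpoint of the component. By linearity of expectation, $\EE(|T^*_{2j-1}|) = |T_{2j-1}| \cdot \Pr[v \in T_{2j-1}\text{ is in }T^*_{2j-1}]$, so together with \eqref{eq:T_i} it suffices to compute this single probability. First I would fix $v \in T_{2j-1}$ and recall, exactly as in the even case, that if $\{v_0, v_1, v_2, \dots\}$ (with $v_i < v_{i+1}$) is the vertex set of the component of $D$ containing $v$, then $v_i \in T_i$, and hence $v = v_{2j-1}$ sits at position $2j-1$.

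Next I would identify the event $\{v \in T^*_{2j-1}\}$. The component of $v$ in $D[[n]_p]$ is the maximal run of retained vertices around $v$, and its smallest vertex is $v_s$, where $s$ is determined by the downward run; the upward vertices $v_{2j}, v_{2j+1}, \dots$ do not affect the distance and may be ignored. The distance from $v$ to $v_s$ is $2j-1-s$, which is even iff $s$ is odd. Hence the event occurs precisely when $v \in [n]_p$ and there is some $r$ with $0 \le r \le j-1$ such that $v_{2j-2-2r} \notin [n]_p$ while $v_i \in [n]_p$ for all $2j-1-2r \le i \le 2j-1$. The crucial contrast with the even case is that the run cannot terminate by reaching $v_0$ and still give an even distance (the distance to $v_0$ would be $2j-1$, which is odd); consequently the all-retained boundary term $p^{2j}$ that appears in \eqref{eq:prob} is \emph{absent} here.

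Summing the disjoint contributions then gives
\begin{equation*}
\Pr[v \in T_{2j-1}\text{ is in }T^*_{2j-1}] = p(1-p)\left(1 + p^2 + \cdots + p^{2j-2}\right) = p(1-p)\frac{1-p^{2j}}{1-p^2} = \frac{p(1-p^{2j})}{1+p}.
\end{equation*}
Multiplying by $|T_{2j-1}| = \frac{b-1}{b}\frac{n}{b^{2j-1}} \pm 1$ from \eqref{eq:T_i} and simplifying, using $\frac{p^{2j}}{b^{2j-1}} = \left(\frac{p}{b}\right)^{2j-1}p$, yields the claimed expression, the error being absorbed into $\pm 1$ because the probability is at most $1$.

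I do not expect a serious obstacle; the content is a parity bookkeeping exercise. The one point requiring care, and the one that produces the minus sign distinguishing this lemma from Lemma~\ref{lem:mean_even}, is recognizing that reaching the bottom vertex $v_0$ from the odd position $2j-1$ contributes an odd distance and therefore drops out, leaving only the $(1-p)$-weighted geometric sum with no boundary term. I would double-check the index ranges and the algebraic simplification to confirm that the sign and the power of $p$ come out exactly as stated.
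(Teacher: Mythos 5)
Your proposal is correct and follows essentially the same route as the paper: it reduces to the single-vertex probability via linearity of expectation, derives $\Pr[v\in T_{2j-1}\text{ is in }T^*_{2j-1}]=p(1-p)\(1+p^2+\cdots+p^{2j-2}\)$ exactly as in \eqref{eq:prob(2)}, and then multiplies by $|T_{2j-1}|$ from \eqref{eq:T_i}. Your explicit parity argument (that the all-retained run down to $v_0$ gives odd distance $2j-1$ and hence no $p^{2j}$ boundary term) is precisely the detail the paper compresses into ``an argument similar to the proof of \eqref{eq:prob}.''
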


\begin{proof} Using an argument similar to the proof of~\eqref{eq:prob}, one may obtain that
\begin{equation}\label{eq:prob(2)}
\Pr\[v\in T_{2j-1} \mbox{ is in } T^*_{2j-1}\]=p\((1-p)+p^2(1-p)+\cdots +p^{2j-2}(1-p)\).
\end{equation}

Thus we infer
\begin{eqnarray*}
\EE\(|T^*_{2j-1}|\)&=& |T_{2j-1}|\cdot \Pr\[v\in T_{2j-1} \mbox{ is in } T^*_{2j-1}\] \\
&\overset{\eqref{eq:T_i}, \eqref{eq:prob(2)}}{=}& \((b-1)\frac{n}{b^{2j}}\pm 1\) p(1-p)\frac{1-p^{2j}}{1-p^2} \\ 
&=&\frac{b-1}{1+p}pn\(\frac{1}{b^{2j}}-\(\frac{p}{b}\)^{2j}\)\pm 1,
\end{eqnarray*}
which completes the proof of Lemma~\ref{lem:mean_odd}.
\end{proof}

Lemmas~\ref{lem:mean_even} and~\ref{lem:mean_odd} immediately imply the following.

\begin{corollary}\label{coro:mean}For $0\leq i\leq \log_b n$, we have 
\begin{equation}\label{eq:mean}
\EE\(|T^*_i|\) = \frac{b-1}{b(1+p)}pn\(\frac{1}{b^i}+\(-\frac{p}{b}\)^i p\) \pm 1.
\end{equation}
\end{corollary}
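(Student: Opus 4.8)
The plan is to obtain Corollary~\ref{coro:mean} by merging the two parity cases already settled in Lemmas~\ref{lem:mean_even} and~\ref{lem:mean_odd}. The only substantive observation is that the single sign-carrying factor $\left(-\frac{p}{b}\right)^i$ in~\eqref{eq:mean} reproduces a $+$ sign when $i$ is even and a $-$ sign when $i$ is odd, so it interpolates between the two formulas without any new computation.

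First I would split the range $0\leq i\leq \log_b n$ according to the parity of $i$. For even indices, write $i=2j$ with $0\leq j\leq (\log_b n)/2$ and invoke Lemma~\ref{lem:mean_even}. Because $(-1)^{2j}=1$, we have $\left(-\frac{p}{b}\right)^{2j}=\left(\frac{p}{b}\right)^{2j}$, so the expression supplied by Lemma~\ref{lem:mean_even} is exactly~\eqref{eq:mean} evaluated at $i=2j$, including the additive error $\pm 1$.

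Next, for odd indices, write $i=2j-1$ with $1\leq j\leq (\log_b n)/2$ and invoke Lemma~\ref{lem:mean_odd}. Here $(-1)^{2j-1}=-1$, so $\left(-\frac{p}{b}\right)^{2j-1}=-\left(\frac{p}{b}\right)^{2j-1}$, which converts the explicit minus sign in the statement of Lemma~\ref{lem:mean_odd} into the unified term $\left(-\frac{p}{b}\right)^i p$ of~\eqref{eq:mean}, again with the error $\pm 1$ carried over unchanged. Since the two parities exhaust every $i$ in the stated range, this establishes~\eqref{eq:mean}.

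I expect essentially no obstacle: the corollary is a notational consolidation rather than a new estimate, and all the analytic content resides in the two preceding lemmas. The only point worth flagging is the sign bookkeeping, and even that is already anticipated by the alternation visible in the probability computations~\eqref{eq:prob} and~\eqref{eq:prob(2)}, so the merged statement requires no fresh argument.
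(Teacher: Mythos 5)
Your proposal is correct and matches the paper exactly: the paper states that Lemmas~\ref{lem:mean_even} and~\ref{lem:mean_odd} ``immediately imply'' the corollary, which is precisely your parity-splitting observation that $\left(-\frac{p}{b}\right)^i$ unifies the $+$ sign for even $i$ and the $-$ sign for odd $i$. No further argument is needed.
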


Summing over all $i$ with $0\leq i\leq \log_b n$, we have the following.

\begin{corollary}\label{clm:mean_total}
\begin{equation*}
\EE\(|T^*|\)=\sum_{i=0}^{\log_b n} \EE(|T^*_i|)=\frac{b}{b+p}pn +O(\log n).
\end{equation*}
\end{corollary}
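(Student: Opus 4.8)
The plan is to sum the estimate of Corollary~\ref{coro:mean} over the range $0\le i\le \log_b n$. Writing
$$\EE\(|T^*_i|\)=\frac{b-1}{b(1+p)}pn\(\frac{1}{b^i}+p\(-\frac{p}{b}\)^i\)\pm 1,$$
I would first separate the two geometric contributions from the $\pm 1$ error. Since there are at most $\log_b n+1$ summands, the accumulated error coming from the $\pm 1$ terms is $O(\log n)$, which already matches the error term claimed in Corollary~\ref{clm:mean_total}.

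For the main part I would evaluate the two geometric series. The sum $\sum_{i=0}^{\log_b n}b^{-i}$ converges to $\frac{b}{b-1}$ with a tail of order $b^{-\log_b n}=O(1/n)$, and since $|{-p/b}|<1$, the sum $\sum_{i=0}^{\log_b n}p\(-p/b\)^i$ converges to $\frac{pb}{b+p}$ with a tail of the same order $O(1/n)$. Substituting these gives
$$\sum_{i=0}^{\log_b n}\EE\(|T^*_i|\)=\frac{b-1}{b(1+p)}pn\(\frac{b}{b-1}+\frac{pb}{b+p}\)+O(1)+O(\log n),$$
where the $O(1)$ absorbs the two $O(1/n)$ tails after multiplication by the prefactor of order $pn$.

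Finally I would simplify the constant. Placing the bracketed expression over the common denominator $(1+p)(b+p)$ and multiplying by $\frac{b-1}{b(1+p)}$, the factor $b-1$ cancels and the whole coefficient collapses to exactly $\frac{b}{b+p}$, so that $\EE\(|T^*|\)=\frac{b}{b+p}pn+O(\log n)$, as desired.

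I do not anticipate any genuine obstacle: the argument is a routine finite geometric-series computation. The only point needing a little care is the bookkeeping of the two error sources — verifying that the $\pm 1$ summation contributes $O(\log n)$ and that the geometric tails contribute only $O(1)$ once multiplied by the $pn$ prefactor — so that both stay within the stated $O(\log n)$ bound.
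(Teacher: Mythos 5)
Your proposal is correct and follows essentially the same route as the paper: sum the estimate of Corollary~\ref{coro:mean} over $0\le i\le \log_b n$, evaluate the two finite geometric series with $O(1/n)$ tails (the paper packages this as its identity~\eqref{eq:sum_geom} applied with $x=b$ and $x=-b/p$), collect the $\pm 1$ errors into $O(\log n)$, and simplify the coefficient to $\frac{b}{b+p}$. One cosmetic quibble: the common denominator of $\frac{b}{b-1}+\frac{pb}{b+p}$ is $(b-1)(b+p)$ rather than $(1+p)(b+p)$, but your conclusion stands, since the bracket equals $\frac{b^2(1+p)}{(b-1)(b+p)}$ and multiplying by $\frac{b-1}{b(1+p)}$ indeed gives exactly $\frac{b}{b+p}$.
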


\begin{proof}

One may easily see that for $|x|\geq b\geq 2$,
\begin{equation}\label{eq:sum_geom} \sum_{i=0}^{\log_b n} \frac{1}{x^i} =\frac{x}{x-1}+O\(\frac{1}{n}\). \end{equation} 
Corollary~\ref{coro:mean} yields that for $b\geq 2$
\begin{eqnarray}\label{eq:even_sum}
\sum_{i=0}^{\log_b n} \EE(|T^*_{j}|) 
&\overset{\eqref{eq:mean}}{=}& \sum_{i=0}^{\log_b n} \[\frac{b-1}{b(1+p)}pn\(\frac{1}{b^{i}}+\(-\frac{p}{b}\)^ip\)\pm 1\] \nonumber \\
&\overset{\eqref{eq:sum_geom}}{=}& \frac{b-1}{b(1+p)}pn\[\frac{b}{b-1}+O\(\frac{1}{n}\)+\frac{-b/p}{-b/p-1}p+O\(\frac{1}{n}\)\]+O(\log n) \nonumber \\
&=&\frac{b}{b+p}pn+O(\log n),
\end{eqnarray}
which completes the proof of Corollary~\ref{clm:mean_total}.
\end{proof}

\subsection{Concentration}\label{ssec:concentration}
Next we consider a concentration result of $|T^*_{i}|$. In other words, we show that $|T^*_{i}|$ is around its expectation with high probability.  We will apply the following version of Chernoff bounds.

\begin{lemma}[Chernoff bound]\label{lem:Chernoff}
Let $X_i$ be independent random variables such that $\Pr[X_i=1]=p_i$ and $\Pr[X_i=0]=1-p_i$, and let $X=\sum_{i=1}^{n} X_i$. Then for any $\lambda\geq 0$,
\begin{eqnarray}
\Pr\[ X\geq (1+\lambda)\EE(X)\] & \leq  & e^{-\frac{\lambda^2}{2+\lambda}\EE(X)}, \label{eq:Chernoff_upper} \\
\Pr\[ X\leq (1-\lambda)\EE(X)\]  & \leq & e^{-\frac{\lambda^2}{2}\EE(X)}. \label{eq:Chernoff_lower} 
\end{eqnarray}

In particular, for $0\leq \lambda\leq 1$,
\begin{equation}\label{eq:Chernoff}\Pr \[|X-\EE(X)|\geq \lambda\EE(X)\]\leq 2 e^{-\frac{\lambda^2}{3}\EE(X)}. \end{equation}

\end{lemma}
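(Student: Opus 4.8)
The plan is to establish the two one-sided bounds \eqref{eq:Chernoff_upper} and \eqref{eq:Chernoff_lower} by the standard exponential-moment (Bernstein--Chernoff) method, and then to combine them to obtain \eqref{eq:Chernoff}. Throughout write $\mu:=\EE(X)=\sum_i p_i$.

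For the upper tail I would fix a parameter $t>0$ and apply Markov's inequality to the nonnegative random variable $e^{tX}$:
$$\Pr[X\geq a]=\Pr[e^{tX}\geq e^{ta}]\leq e^{-ta}\,\EE(e^{tX}).$$
By independence the moment generating function factorizes, $\EE(e^{tX})=\prod_i\EE(e^{tX_i})=\prod_i\(1-p_i+p_ie^t\)$, and the termwise estimate $1+x\leq e^x$ gives $\EE(e^{tX})\leq e^{(e^t-1)\mu}$. Taking $a=(1+\lambda)\mu$ and optimizing over $t$ (the minimum occurs at $e^t=1+\lambda$) yields the sharp bound
$$\Pr[X\geq(1+\lambda)\mu]\leq\(\frac{e^\lambda}{(1+\lambda)^{1+\lambda}}\)^{\mu}.$$

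The remaining step, and technically the most delicate one, is to pass from this expression to the clean form \eqref{eq:Chernoff_upper}. After taking logarithms this amounts to the one-variable inequality
$$\lambda-(1+\lambda)\log(1+\lambda)\leq-\frac{\lambda^2}{2+\lambda}\qquad(\lambda\geq0),$$
which follows from the elementary bound $\log(1+\lambda)\geq 2\lambda/(2+\lambda)$ (both sides vanish at $\lambda=0$ and a one-line derivative comparison gives the rest). This is the main obstacle, since the optimized Chernoff estimate is not literally of exponential-quadratic shape and the $2+\lambda$ in the denominator must be tracked carefully. The lower tail \eqref{eq:Chernoff_lower} is handled symmetrically by taking $t<0$ (equivalently $e^t=1-\lambda$), which gives $\Pr[X\leq(1-\lambda)\mu]\leq\(e^{-\lambda}/(1-\lambda)^{1-\lambda}\)^{\mu}$ and leaves the analogous inequality $-\lambda-(1-\lambda)\log(1-\lambda)\leq-\lambda^2/2$.

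Finally, for the two-sided statement \eqref{eq:Chernoff} I would restrict to $0\leq\lambda\leq1$, where $\frac{\lambda^2}{2+\lambda}\geq\frac{\lambda^2}{3}$ and $\frac{\lambda^2}{2}\geq\frac{\lambda^2}{3}$, so that each one-sided probability is at most $e^{-\lambda^2\mu/3}$ and summing the two produces the factor $2$. Since this is an entirely standard estimate, an equally acceptable alternative is simply to cite it from a reference such as the monograph of Janson, \L uczak, and Ruci\'nski; the exponential-moment route above is the one I would take for a self-contained proof.
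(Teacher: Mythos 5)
Your proposal is correct, and it actually supplies more than the paper itself does: the paper states Lemma~\ref{lem:Chernoff} without any proof, treating it as a standard Chernoff--Bernstein estimate to be used as a black box in Lemmas~\ref{lem:concentration_large_i} and~\ref{lem:concentration_small_i}, so your closing remark that one could simply cite a monograph such as Janson, {\L}uczak, and Ruci\'{n}ski is precisely the paper's stance. Your self-contained derivation is sound: Markov's inequality applied to $e^{tX}$, the factorization $\EE(e^{tX})\leq e^{(e^t-1)\mu}$, and the choice $e^t=1+\lambda$ give the optimized bound, and the passage to the stated exponential-quadratic form does go through via $\log(1+\lambda)\geq 2\lambda/(2+\lambda)$, since
$$(1+\lambda)\,\frac{2\lambda}{2+\lambda}-\lambda=\frac{\lambda^2}{2+\lambda},$$
and the derivative comparison you invoke reduces to $(2+\lambda)^2\geq 4(1+\lambda)$, i.e.\ $\lambda^2\geq 0$. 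Likewise $(1-\lambda)\log(1-\lambda)+\lambda\geq\lambda^2/2$ holds because this function vanishes at $\lambda=0$ and has derivative $-\log(1-\lambda)-\lambda\geq 0$. Two small points you should add for completeness: the substitution $e^t=1-\lambda$ only makes sense for $0\leq\lambda<1$, so for the lower tail \eqref{eq:Chernoff_lower} with $\lambda>1$ you should note the event $X\leq(1-\lambda)\mu$ is empty when $\mu>0$ (as $X\geq 0$), while at $\lambda=1$ one has $\Pr[X=0]=\prod_i(1-p_i)\leq e^{-\mu}\leq e^{-\mu/2}$; and the degenerate case $\mu=0$ is trivial in all three inequalities. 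With those remarks, the argument is complete, and your final union-bound step for \eqref{eq:Chernoff}, using $2+\lambda\leq 3$ for $0\leq\lambda\leq 1$, is exactly right.
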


We first consider the case when $0\leq i\leq 0.9\log_b n$.

\begin{lemma}\label{lem:concentration_large_i} For $0\leq i\leq 0.9\log_b n$, we have
\begin{equation}\label{eq:T^*_i}
|T^*_i|=\EE\(|T^*_i|\)+O\(\sqrt{pn}\log\log n\)
\end{equation}
with probability at least $1-2e^{-\frac{1}{3}(\log\log n)^2}$.

\end{lemma}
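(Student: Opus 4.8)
The plan is to express $|T^*_i|$ as a sum of \emph{independent} indicator random variables and then invoke the Chernoff bound of Lemma~\ref{lem:Chernoff}. For each $v\in T_i$ let $X_v$ be the indicator of the event $\{v\in T^*_i\}$, so that $|T^*_i|=\sum_{v\in T_i}X_v$. The crucial structural point is the observation already recorded in Sections~\ref{sec:main theorem} and~\ref{sec:main lemma}: every component of $D$ is a directed path whose $j$-th vertex lies in $T_j$, and therefore each component meets $T_i$ in at most one vertex. Consequently distinct $v,v'\in T_i$ lie in distinct components of $D$.

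Next I would pin down exactly which coordinates of $[n]_p$ the variable $X_v$ depends on. As in the derivation of \eqref{eq:prob} and \eqref{eq:prob(2)}, whether $v\in T^*_i$ is determined by the presence in $[n]_p$ of $v$ together with the $i$ vertices $v_0,v_1,\dots,v_{i-1}$ lying below $v$ on its $D$-path: the identity of the smallest present vertex of $v$'s component of $D[[n]_p]$, and the parity of its distance to $v$, are both determined by looking only downward from $v$. These $i+1$ integers are all at most $v\le n$ and hence genuinely belong to $[n]$, so the downward path always has full length $i$; in particular each $X_v$ is a Bernoulli variable with the \emph{same} parameter, namely the probability computed in \eqref{eq:prob}/\eqref{eq:prob(2)}. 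Because the relevant vertex sets $\{v_0,\dots,v_i\}$ for different $v\in T_i$ sit in different components and are thus pairwise disjoint, the family $\{X_v\}_{v\in T_i}$ consists of functions of disjoint blocks of the independent coordinates of $[n]_p$, and so is mutually independent. This is precisely the hypothesis of Lemma~\ref{lem:Chernoff} with $X=|T^*_i|$.

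With independence in hand the rest is a routine tail estimate. Write $\mu:=\EE(|T^*_i|)$ and set the target deviation $t:=C\sqrt{pn}\,\log\log n$ for a large absolute constant $C$. From Corollary~\ref{coro:mean} one has the crude bound $\mu=O(pn)$, valid uniformly for every $i$ in range, so $\sqrt{\mu}=O(\sqrt{pn})$. Applying \eqref{eq:Chernoff_upper} and \eqref{eq:Chernoff_lower} with $\lambda=t/\mu$ converts the two exponents into $t^2/(2\mu+t)$ and $t^2/(2\mu)$ respectively. Choosing $C$ large and splitting according to whether $t\le 2\mu$ or $t>2\mu$ (the lower tail being vacuous once $t>\mu$, since then $\mu-t<0$), one checks that each exponent is at least $\tfrac13(\log\log n)^2$, using $pn\to\infty$ to absorb the factor $\log\log n$ into $\sqrt{pn}$. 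Summing the two one-sided bounds then yields $\Pr\[\bigl||T^*_i|-\mu\bigr|\ge t\]\le 2e^{-\frac13(\log\log n)^2}$, which is exactly \eqref{eq:T^*_i}.

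I expect the main obstacle to be the independence claim rather than the Chernoff computation: one must argue carefully that the events defining $T^*_i$ at different base points $v$ involve disjoint sets of random coordinates, and this is where the path structure of $D$ and the fact that each component meets $T_i$ at most once are essential. The restriction $i\le 0.9\log_b n$ plays no serious role in the argument above — the bound $\mu=O(pn)$ is all that is needed — and indeed for larger $i$ the set $T_i$ has size $O(n/b^i)=o(\sqrt{pn})$ by Fact~\ref{fact:T_i}, so the concentration \eqref{eq:T^*_i} then holds trivially and deterministically.
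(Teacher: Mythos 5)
Your proof is correct, and its skeleton is the paper's: write $|T^*_i|=\sum_{v\in T_i}X_v$, argue that the $X_v$ are independent because distinct elements of $T_i$ lie in distinct components of $D$ (the $j$-th vertex of each component belongs to $T_j$) and each $X_v$ is a function only of the coordinates at or below $v$ on its path, then invoke Lemma~\ref{lem:Chernoff}. The one substantive divergence is how the Chernoff bound is deployed. The paper sets $\lambda=\log\log n/\sqrt{\EE(X)}$ and applies the two-sided inequality \eqref{eq:Chernoff}, which is valid only for $0\le\lambda\le 1$; this is exactly where the hypothesis $i\le 0.9\log_b n$ enters, since it guarantees $\EE(X)=\Omega(pn/b^i)=\Omega(pn^{0.1})\gg(\log\log n)^2$ and hence $\lambda\le 1$. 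You instead fix the absolute deviation $t=C\sqrt{pn}\log\log n$ and use the one-sided bounds \eqref{eq:Chernoff_upper} and \eqref{eq:Chernoff_lower}, which hold for all $\lambda\ge 0$, disposing of the regime $t>2\mu$ by the easy observation that the lower tail is vacuous and the upper-tail exponent exceeds $t/2\gg(\log\log n)^2$. Consequently your argument needs only the upper bound $\EE(X)=O(pn)$ and no lower bound on $\EE(X)$, so, as you correctly note, the restriction $i\le 0.9\log_b n$ becomes superfluous: for $i>0.9\log_b n$ the statement even holds deterministically since $|T_i|=O(n^{0.1})=o(\sqrt{pn})$ by Fact~\ref{fact:T_i} (which is, in effect, the content the paper relegates to the separate Lemma~\ref{lem:concentration_small_i}). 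In short: the paper's normalization buys a shorter computation; yours buys uniformity in $i$ and eliminates the need for the expectation lower bound, at the cost of a small case analysis.
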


\begin{proof}
Fix $i$. 
If $k\in T_i \subset [n]$, then let $$X_k=\left\{\begin{array}{rl} 1 & \mbox{with probability } p^*  \\ 0 & \mbox{with probability } 1-p^*.
\end{array} \right.$$
where $p^*=\Pr \[v\in T_{i} \mbox{ is in } T^*_{i}\]$.
Otherwise, let $X_k=0$ with probability $1$. Let $X=\sum_{k=1}^n X_k$.
Observe that
\begin{equation}\label{eq:X}X=|T^*_i|\end{equation}
as random variables.

Note that for each $k\in T_i$, the event that $k\in T^*_i$ depends only on the events that $v\in [n]_p$, where the vertices $v$ are in the component of $D$ containing $k$ and $v\leq k$. Hence, $X_k$ are independent for all $k\in T_i$. Therefore we are able to use Chernoff bounds (Lemma~\ref{lem:Chernoff}) for a concentration result of $X$. 

Set $\displaystyle \lambda=\frac{\log\log n}{\sqrt{\EE(X)}}.$  Note that $0\leq \lambda\leq 1$ for $0\leq i\leq 0.9\log_b n$ since
$$\EE(X)\geq \Omega\(pn\frac{\epsilon_p}{b^i}\)\geq \Omega\(pn\frac{\epsilon_p}{n^{0.9}}\)=\Omega\(\epsilon_p pn^{0.1}\),
$$
where $\epsilon_p$ is a positive constant such that $\epsilon_p\to 0$ as $p\to 1$.
The inequality~\eqref{eq:Chernoff} yields that 
\begin{eqnarray}\label{eq:T^*_i(1)}
\Pr\[|X-\EE(X)|\geq  \sqrt{\EE\(X\)}\log\log n\]\leq 2e^{-\frac{1}{3}(\log\log n)^2}.
\end{eqnarray}
Corollary~\ref{coro:mean} yields that $ \EE\(|X|\)= O(pn),$ and hence, we infer that
\begin{equation*}
X=\EE\(X\)+O\(\sqrt{pn}\log\log n\)
\end{equation*}
with probability at least $1-2e^{-\frac{1}{3}(\log\log n)^2}$.
This together with~\eqref{eq:X} completes the proof of Lemma~\ref{lem:concentration_large_i}.
\end{proof}

Next we consider the remaining case when $0.9\log_b n\leq i\leq \log_b n$.

\begin{lemma}\label{lem:concentration_small_i} For $0.9\log_b n\leq i\leq \log_b n$, we have
$|T^*_i|=O\((pn)^{0.1}\),$
with probability at least $1-e^{-(\log\log n)^2}$.
\end{lemma}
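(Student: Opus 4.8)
The plan is to keep $i$ fixed in the range $0.9\log_b n \le i \le \log_b n$ and to reuse the random-variable decomposition from the proof of Lemma~\ref{lem:concentration_large_i}: write $X=|T^*_i|=\sum_k X_k$, where for $k\in T_i$ the $X_k$ are independent indicators with $\Pr[X_k=1]=p^*$ and $p^*=\Pr[v\in T_i\text{ is in }T^*_i]$, and $X_k\equiv 0$ otherwise. The whole reason for isolating this range of $i$ is that here $b^i\ge n^{0.9}$, so by Fact~\ref{fact:T_i} one has $|T_i|=O(n^{0.1})$, and since $p^*\le p$ (compare~\eqref{eq:prob}), the mean satisfies $\EE(X)=|T_i|\,p^*=O(pn^{0.1})$; equivalently, using $p\le p^{0.1}$ for $p\in(0,1)$, $\EE(X)=O\((pn)^{0.1}\)$. (This also follows directly from Corollary~\ref{coro:mean}.) In particular $\EE(X)$ may be as small as $O(1)$ when $i$ is close to $\log_b n$.

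It is exactly this last point that forces a different tool than in Lemma~\ref{lem:concentration_large_i}: the symmetric bound~\eqref{eq:Chernoff} is only available for $\lambda\le 1$, i.e.\ when $\EE(X)$ is large, which fails here. Instead I would apply the one-sided upper-tail estimate~\eqref{eq:Chernoff_upper} in the genuinely large-deviation regime. Concretely, set the threshold $t=C(pn)^{0.1}$ with $C$ an absolute constant chosen large enough (using $\EE(X)=O(pn^{0.1})$ together with $pn^{0.1}\le (pn)^{0.1}$) that $t\ge 3\,\EE(X)$ for all large $n$. Then in~\eqref{eq:Chernoff_upper} we have $1+\lambda=t/\EE(X)\ge 3$, hence $\lambda\ge 2$ and $\frac{\lambda}{2+\lambda}\ge\frac12$, so the exponent is
\[
\frac{\lambda^2}{2+\lambda}\EE(X)=\frac{\lambda}{2+\lambda}\bigl(t-\EE(X)\bigr)\ge \frac12\Bigl(t-\tfrac{t}{3}\Bigr)=\frac{t}{3}.
\]
This yields $\Pr[X\ge t]\le e^{-t/3}$.

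To finish, observe that for fixed $p\in(0,1)$ the threshold grows polynomially, $t/3=\frac{C}{3}p^{0.1}n^{0.1}$, whereas $(\log\log n)^2=o(n^{0.1})$; hence $t/3\ge(\log\log n)^2$ for all sufficiently large $n$, and therefore $\Pr[|T^*_i|\ge C(pn)^{0.1}]\le e^{-(\log\log n)^2}$. Since $X=|T^*_i|\ge 0$, only this upper tail is relevant for the desired bound $|T^*_i|=O\((pn)^{0.1}\)$, so the proof is complete. The one delicate step to get right is the choice of $C$ and the verification that $t\ge 3\,\EE(X)$ across the whole range $0.9\log_b n\le i\le \log_b n$ (equivalently, that $(pn)^{0.1}$ dominates the mean $O(pn^{0.1})$ by a fixed factor), since the mean varies from $O(1)$ up to $\Theta(pn^{0.1})$ as $i$ decreases; the worst case is $i=0.9\log_b n$, and there $p<1$ keeps $t/\EE(X)$ bounded below by a constant multiple of $C$. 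Everything else is a routine application of~\eqref{eq:Chernoff_upper}.
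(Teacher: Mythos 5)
Your proof is correct, and although it uses the same ingredients as the paper --- the decomposition of $X=|T^*_i|$ into independent indicators, the mean bound $\EE(X)=O((pn)^{0.1})$, and the one-sided Chernoff estimate \eqref{eq:Chernoff_upper} --- it runs that estimate in a genuinely different deviation regime, and yours is the one that actually works. The paper takes $\lambda = 2(\log\log n)^2/\EE(X)$, aiming for the additive conclusion \eqref{eq:X(200)}, namely $X\leq\EE(X)+2(\log\log n)^2$ with probability at least $1-e^{-(\log\log n)^2}$. But extracting the exponent $\frac{\lambda}{2}\EE(X)=(\log\log n)^2$ from \eqref{eq:Chernoff_upper} requires $\frac{\lambda^2}{2+\lambda}\geq\frac{\lambda}{2}$, i.e.\ $\lambda\geq 2$, i.e.\ $\EE(X)\leq(\log\log n)^2$; this fails on much of the range $0.9\log_b n\leq i\leq\log_b n$, since $\EE(X)=\Theta(pn/b^i)$ is as large as $\Theta(pn^{0.1})$ at the lower end. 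Worse, in that sub-range the paper's intermediate claim is not merely unproven but false: there $X$ has standard deviation $\Theta(\sqrt{\EE(X)})\gg(\log\log n)^2$, so it exceeds $\EE(X)+2(\log\log n)^2$ with probability close to $1/2$ (the lemma itself is of course still true). Your parameterization sidesteps this: taking the threshold $t=C(pn)^{0.1}\geq 3\EE(X)$ forces $\lambda\geq 2$ by construction, your computation $\frac{\lambda^2}{2+\lambda}\EE(X)\geq t/3$ is then valid, and $t/3=\frac{C}{3}p^{0.1}n^{0.1}\geq(\log\log n)^2$ for large $n$ because $p$ is a fixed constant in $(0,1)$ --- the same fact the paper's final step relies on anyway. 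The trade-off is exactly as you describe it: the paper's choice of $\lambda$ would buy a sharp additive error of $2(\log\log n)^2$, which is unattainable when the mean is polynomially large and which the lemma never needs, whereas your large-deviation choice proves only the constant-factor bound $X\leq C(pn)^{0.1}$ --- but that is precisely what the lemma asserts, uniformly over the whole range of $i$. So your write-up is not just a correct alternative; it repairs a genuine flaw in the paper's own proof of this lemma.
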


\begin{proof}We define a random variable $X$ as in~\eqref{eq:X}, that is, $X=|T^*_i|$. 
Set $\displaystyle\lambda=\frac{2(\log\log n)^2}{\EE(X)}.$  The inequality~\eqref{eq:Chernoff_upper} yields that 
\begin{eqnarray*}
\Pr\[X\geq (1+\lambda)\EE(X)\]\leq e^{-\frac{\lambda}{2}\EE\(X\)}=e^{-(\log\log n)^2},
\end{eqnarray*}
and hence,
\begin{eqnarray}\label{eq:T^*_i(2)}
\Pr\[X\geq \EE(X)+2(\log\log n)^2\]\leq e^{-(\log\log n)^2}.
\end{eqnarray}
In other words, 
\begin{equation}\label{eq:X(200)}
X\leq \EE(X)+2(\log\log n)^2
\end{equation}
with probability at least $1-e^{-(\log\log n)^2}.$

Corollary~\ref{coro:mean} gives that
\begin{equation}\label{eq:X(100)}
 \EE(X)=O\(pn\frac{1}{b^i}\)=O\(p n^{0.1}\)=O\((pn)^{0.1}\),
\end{equation}
where the second inequality holds for $i\geq 0.9\log_b n$.
Thus, combining~\eqref{eq:X(200)} and~\eqref{eq:X(100)} completes the proof of Lemma~\ref{lem:concentration_small_i}.
\end{proof}

Now we are ready to show Lemma~\ref{lem:main}.

\begin{proof}[Proof of Lemma~\ref{lem:main}] We have that
\begin{eqnarray*}
|T^*|&=&\sum_{i=1}^{\log_b n} |T^*_i| =\sum_{i=1}^{\lfloor 0.9\log_b n\rfloor} |T^*_i|+\sum_{i=\lfloor 0.9\log_b n\rfloor+1}^{\log_b n} |T^*_i|.
\end{eqnarray*}
Lemmas~\ref{lem:concentration_large_i} and~\ref{lem:concentration_small_i} give that
\begin{eqnarray*}
|T^*|&= & \sum_{i=1}^{\log_b n} \EE\(|T^*_i|\)+O\(\sqrt{pn}\log n\log\log n\),
\end{eqnarray*}
with probability at least
\begin{equation}
1-(\log_b n)\cdot 2e^{-\frac{1}{3}(\log\log n)^2}=1-2e^{\log\log_b n-\frac{1}{3}(\log\log n)^2}=1-o(1).
\end{equation}

This together with Corollary~\ref{clm:mean_total} implies that with high probability
\begin{eqnarray*}
|T^*|&= &\frac{b}{b+p}pn +O\(\sqrt{pn}\log n\log\log n\),
\end{eqnarray*}
which completes the proof of Lemma~\ref{lem:main}.
\end{proof}

\begin{acknowledge}
  The author thanks
 Yoshiharu Kohayakawa for his helpful comments and suggestions, and thanks Jaigyoung Choe for his support at Korea Institute for Advanced Study.
\end{acknowledge}


\begin{thebibliography}{10}

\bibitem{Chowla44}
S.~Chowla.
\newblock Solution of a problem of {E}rd\"os and {T}uran in additive-number
  theory.
\newblock {\em Proc. Nat. Acad. Sci. India. Sect. A.}, 14:1--2, 1944.

\bibitem{conlon:_theor_in_spars_random}
D.~Conlon and W.~T. Gowers.
\newblock Combinatorial theorems in sparse random sets.
\newblock submitted, 70pp, 2010.

\bibitem{Erdos44}
P.~Erd{\"o}s.
\newblock On a problem of {S}idon in additive number theory and on some related
  problems. {A}ddendum.
\newblock {\em J. London Math. Soc.}, 19:208, 1944.

\bibitem{Erdos41}
P.~Erd{\"o}s and P.~Tur{\'a}n.
\newblock On a problem of {S}idon in additive number theory, and on some
  related problems.
\newblock {\em J. London Math. Soc.}, 16:212--215, 1941.

\bibitem{Jimbo2007}
M.~Jimbo, M.~Mishima, S.~Janiszewski, A.~Y. Teymorian, and V.~D. Tonchev.
\newblock On conflict-avoiding codes of length {$n=4m$} for three active users.
\newblock {\em IEEE Trans. Inform. Theory}, 53(8):2732--2742, 2007.

\bibitem{Sidon_SODA}
Y.~Kohayakawa, S.~Lee, and V.~R{\"o}dl.
\newblock The maximum size of a {S}idon set contained in a sparse random set of
  integers.
\newblock In {\em Proceedings of the {T}wenty-{S}econd {A}nnual {ACM}-{SIAM}
  {S}ymposium on {D}iscrete {A}lgorithms}, pages 159--171, Philadelphia, PA,
  2011. SIAM.

\bibitem{Sidon}
Y.~Kohayakawa, S.~J. Lee, V.~R{\"o}dl, and W.~Samotij.
\newblock The number of {S}idon sets and the maximum size of sidon sets
  contained in a sparse random set of integers.
\newblock Accepted to Random Structures \& Algorithms.

\bibitem{KLRap3}
Y.~Kohayakawa, T.~{\L}uczak, and V.~R{\"o}dl.
\newblock Arithmetic progressions of length three in subsets of a random set.
\newblock {\em Acta Arith.}, 75(2):133--163, 1996.

\bibitem{Leung1994}
J.~Y.-T. Leung and W.-D. Wei.
\newblock Maximal {$k$}-multiple-free sets of integers.
\newblock {\em Ars Combin.}, 38:113--117, 1994.

\bibitem{roth53}
K.~F. Roth.
\newblock On certain sets of integers.
\newblock {\em J. London Math. Soc.}, 28:104--109, 1953.

\bibitem{schacht:_extrem_resul}
M.~Schacht.
\newblock Extremal results for random discrete structures.
\newblock submitted, 27pp, 2009.

\bibitem{szemeredi75}
E.~Szemer{\'e}di.
\newblock On sets of integers containing no {$k$} elements in arithmetic
  progression.
\newblock {\em Acta Arith.}, 27:199--245, 1975.
\newblock Collection of articles in memory of Juri\u\i\ Vladimirovi\v c Linnik.

\bibitem{Wakeham}
D.~Wakeham and D.~Wood.
\newblock On multiplicative {S}idon sets.
\newblock {arXiv:1107.1073}.

\bibitem{Wang1989}
E.~T.~H. Wang.
\newblock On double-free sets of integers.
\newblock {\em Ars Combin.}, 28:97--100, 1989.

\end{thebibliography}
\end{document}